\theoremstyle{plain}
\newtheorem{lemma}{Lemma}[section]
\newtheorem{proposition}{Proposition}[section]
\newtheorem{example}{Example}[section]
\newtheorem{corollary}{Corollary}[section]
\newtheorem{definition}{Definition}[section]
\numberwithin{equation}{section}
\begin{document}
	
	\title[Binomial coefficients and multifactorial numbers through generative grammars]{Binomial coefficients and multifactorial numbers through generative grammars}
	\author[Juan Gabriel Triana \and Rodrigo De Castro Korgi]%
	{Juan Gabriel Triana* \and Rodrigo De Castro Korgi** }
	
	\newcommand{\acr}{\newline\indent}
	
	\address{\llap{*\,}Departamento de Matem\'aticas\acr
		Universidad Nacional de  Colombia\acr
		Carrera 45 \# 26-85\acr
		Bogot\'a}
	\email{jtrianal@unal.edu.co}
	
	\address{\llap{**\,}Departamento de Matem\'aticas\acr
		Universidad Nacional de  Colombia\acr
		Carrera 45 \# 26-85\acr
		Bogot\'a}
	\email{rdecastrok@unal.edu.co}

	\keywords{multifactorial numbers, binomial coefficients, context-free grammars, formal derivative operator}
	
	\maketitle
	
\begin{abstract}
	In this paper, the formal derivative operator defined with respect to context-free grammars is used to prove some properties about binomial coefficients and multifactorial numbers. In addition, we extend the formal derivative operator to matrix grammars and show that multifactorial numbers can also be generated. 
\end{abstract}
\section{Introduction}
Let $\Sigma$ be an alphabet, whose letters are regarded as independent commutative indeterminates. Following \cite{Chen93}, a formal function over $\Sigma$ is defined recursively as follows:

\begin{enumerate}
	\item Every letter in $\Sigma$ is a formal function.
	\item If $u,v$ are formal functions, then $u+v$ and $uv$ are formal functions.
	\item If $f(x)$ is an analytic function, and $u$ is a formal function, then $f(u)$ is a formal function.
	\item Every formal function is constructed as above in a finite number of steps. 
\end{enumerate}

A context free grammar $G$ over $\Sigma$ is defined as a set of substitution rules replacing a letter in $\Sigma$ by a formal function over $\Sigma$.  

\begin{definition}\label{operador}
	Given a context-free grammar $G$ over $\Sigma$, the formal derivative operator $D$, with respect to $G$, is defined in the following way:
	
	\begin{enumerate}
		\item For $u,v$ formal functions, $D(u+v)=D(u)+D(v) \text{ and  }D(uv)=D(u)v+uD(v).$ \label{prop1}
		\item If $f(x)$ is an analytic function and $u$ is a formal function,  $D(f(u))=\dfrac{\partial f(u)}{\partial u}D(u)$. 
		\item For $a\in \Sigma$, if $a\rightarrow w$ is a production in $G$, $w$ being   a formal function, then $D(a)=w$; in other cases  $a$ is called a constant and  $D(a)=0$.
	\end{enumerate}
\end{definition}
We next define the iteration of the formal derivative operator. 
\begin{definition}\label{opplus1}
	For a formal function $u$, we define $D^{n+1}(u)=D(D^n(u))$ for $n\geq 0$, and $D^0(u)=u$.
\end{definition}

For instance, given the context-free grammar $G=\{a \rightarrow a+b ; b \rightarrow b\}$, we have $D^0(a)=a$, $D(a)=a+b$, $D(b)=b$, $D(ab)=D(a)b+aD(b)=(a+b)b+ab = b^2+2ab$, and $D^2(a)=D(D(a))$ so $D^2(a)=D(a+b)=D(a)+D(b)=a+2b$.

\medskip

In \cite{Chen93}, formal functions and the formal derivative operator were used to study formal power series in combinatorics. In addition, the formal derivative  operator, defined with respect to context-free grammars, has been used to study increasing trees \cite{ChenFu}, triangular arrays \cite{triangular}, permutations \cite{MaEnumeration}, Stirling permutations of the second kind \cite{MaNan}, and for generating some combinatorial numbers such as Whitney numbers \cite{Lotka}, Ramanujan's numbers \cite{Ramanujan}, Eulerian numbers \cite{ChenFu}, Stirling numbers \cite{MaStirling}, among others \cite{Ma2}. In the same way, some families of polynomials such as Bessel polynomials \cite{Ma2}, Eulerian polynomials \cite{Ma2018}, and other types of polynomials \cite{Dumont}, have been studied by grammatical methods.

\medskip

In this paper, the formal derivative operator defined with respect to context-free grammars is used to prove some properties about binomial coefficients and multifactorial numbers. In addition, we extend the formal derivative operator to matrix grammars and show that multifactorial numbers can also be generated.

\medskip

Our purpose is to obtain properties by means of the formal derivative operator; consequently most proofs are carried out by induction rather than by combinatorial arguments.

\section{Some properties of the formal derivative operator}

The formal derivative operator of Definition \ref{operador} preserves many of the properties of the differential operator in elementary calculus. In the following propositions we state and prove some of them.

\begin{proposition}\label{Dav}
	If $v$ is a formal function and $\alpha \in \mathbb{R}$, then $ D(\alpha v)=\alpha D(v)$.
\end{proposition}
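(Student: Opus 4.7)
The plan is to apply rule (2) of Definition \ref{operador} directly, viewing the scalar multiple $\alpha v$ as the image of $v$ under the analytic function $f(x)=\alpha x$. Since $v$ is a formal function and $f$ is analytic, $f(v)=\alpha v$ is a formal function, and Definition \ref{operador} yields
$$D(\alpha v)=D(f(v))=\frac{\partial f(v)}{\partial v}\,D(v)=\alpha\, D(v),$$
which is the desired identity.

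An alternative route would be to treat $\alpha v$ as a product and invoke the Leibniz rule from part (\ref{prop1}): $D(\alpha v)=D(\alpha)v+\alpha D(v)$. For this one needs $D(\alpha)=0$ for every real constant. The third clause of Definition \ref{operador} only defines $D$ on letters of $\Sigma$, so some care is required here; the cleanest way to handle this is again via rule (2), regarding $\alpha$ as the constant analytic function $g(x)=\alpha$ applied to any formal function, whose partial derivative is zero. Either approach works, but the first is shorter because it sidesteps the product rule altogether.

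The only potential obstacle is precisely this point, namely ensuring that a real scalar $\alpha$ fits within the framework of formal functions so that rule (2) may be applied. Once one accepts that $f(x)=\alpha x$ is admissible as an analytic function in the sense of Definition \ref{operador}, the argument is a one-line verification. I would therefore present the proof as a single application of rule (2) and record this as the basic tool for later computations such as $D(c)=0$ for any real constant $c$, which will presumably be used implicitly throughout the remainder of Section~2.
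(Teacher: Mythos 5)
Your proof is correct and coincides with the paper's: both apply rule (2) of Definition \ref{operador} to the analytic function $f(x)=\alpha x$, giving $D(\alpha v)=\frac{\partial f(v)}{\partial v}D(v)=\alpha D(v)$. The alternative via the product rule that you mention is not needed, and your chosen one-line argument is exactly the one the paper uses.
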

\begin{proof}
	Let $f(x)=\alpha x$. Since $f(x)$ is an analytic function and $v$ is a formal function, from definition \ref{operador} we get $D(f(v)) =\dfrac{\partial f(v)}{\partial v} D(v)= \alpha D(v)$.
\end{proof}

Since $u$ and $v$ are formal functions and $\alpha \in \mathbb{R}$, we have  $D(u+v)=D(u)+D(v)$, by definition \ref{operador}, and $D(\alpha v)=\alpha D(v)$, by proposition \ref{Dav}, thus $D$ is a linear operator. 

\begin{proposition}\label{Dv1}
	If $v$ is a formal function, then $ D(v^{n})=nv^{n-1}D(v)$  for $n\in \mathbb{Z}$.
\end{proposition}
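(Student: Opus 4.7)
The plan is to proceed by induction on $n$ for nonnegative integers using the product rule from Definition \ref{operador}(\ref{prop1}), and then to handle negative integers either by invoking the chain rule with $f(x) = x^n$ or by differentiating the identity $v^{-n}v^n = 1$. The structure mirrors the elementary calculus proof and stays entirely within the axioms of Definition \ref{operador}.

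First I would dispose of the base case. For $n = 1$ we have $D(v^1) = D(v) = 1 \cdot v^{0} D(v)$, which is the desired formula. (The case $n = 0$ is trivial once one interprets $v^0 = 1$ as a constant so that $D(v^0) = 0$.) For the inductive step, assume $D(v^n) = n v^{n-1} D(v)$, write $v^{n+1} = v \cdot v^n$, and apply the Leibniz rule of Definition \ref{operador}(\ref{prop1}):
\begin{align*}
D(v^{n+1}) &= D(v) v^n + v\, D(v^n) \\
&= v^n D(v) + n v^{n} D(v) \\
&= (n+1) v^n D(v),
\end{align*}
completing the induction for $n \geq 0$.

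To extend to $n < 0$, the slickest route is to apply Definition \ref{operador}(2) directly with the analytic function $f(x) = x^n$, giving $D(v^n) = \tfrac{\partial v^n}{\partial v} D(v) = n v^{n-1} D(v)$ in one line, in the same spirit as the proof of Proposition \ref{Dav}. If one prefers to avoid the chain rule here, set $m = -n > 0$ and apply $D$ to $v^m \cdot v^{-m} = 1$. Using $D(1) = 0$, the product rule, and the already-established positive-exponent case, one gets $m v^{m-1} D(v) \cdot v^{-m} + v^m D(v^{-m}) = 0$, and solving for $D(v^{-m})$ yields $-m v^{-m-1} D(v) = n v^{n-1} D(v)$.

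The only potential obstacle is foundational rather than computational: the negative-exponent case requires $v^{-1}$ to make sense as a formal function. This is not really a difficulty, since Definition \ref{operador} admits arbitrary analytic $f$, and $x \mapsto x^n$ is analytic on its natural domain for every $n \in \mathbb{Z}$; alternatively, one regards the identity $v^m v^{-m} = 1$ as defining $v^{-m}$ in the appropriate formal algebra, in which case the derivation above is purely algebraic.
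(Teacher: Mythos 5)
Your proposal is correct, but your primary route differs from the paper's. The paper's entire proof is the one-liner you relegate to the "slickest route" remark: take $f(x)=x^n$, note that it is analytic, and apply clause (2) of Definition \ref{operador} to get $D(v^n)=\frac{\partial v^n}{\partial v}D(v)=nv^{n-1}D(v)$ uniformly for all $n\in\mathbb{Z}$, exactly parallel to the proof of Proposition \ref{Dav}. Your main argument --- induction on $n\ge 0$ via the product rule, followed by differentiating $v^m v^{-m}=1$ to reach negative exponents --- is a genuinely more elementary derivation: it stays within clause (1) of the definition and never invokes the chain-rule axiom, which is a meaningful gain if one is suspicious of treating $x\mapsto x^n$ (for $n<0$, or even the fractional powers one might want later) as "analytic" in the sense required. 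What you lose is brevity and uniformity: the paper dispatches all integers at once, whereas you need two base cases, an induction, and a separate algebraic step for $n<0$, together with the foundational caveat about $v^{-1}$ that you correctly flag (and that the paper silently absorbs into its liberal reading of "analytic function"). Both arguments are sound; the paper simply chose the axiom designed for this purpose.
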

\begin{proof}
	Let $f(x)=x^{n}$. Since $f(x)$ is an analytic function and $v$ is a formal function, by definition \ref{operador}, we have $D(f(v))=\dfrac{\partial f(v)}{\partial v} D(v)= nv^{n-1} D(v)$. 
\end{proof}

By Proposition \ref{Dv1},  for each context-free grammar with a production $a\rightarrow w$ we have $D(a^0)=0$.
\begin{proposition}[Quotient's rule]\label{CocienteD}
	If $u,v$ are formal functions, then $ D(uv^{-1})=(D(u)v-uD(v))v^{-2}$.
\end{proposition}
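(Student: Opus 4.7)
The plan is to mimic the elementary calculus derivation of the quotient rule, treating $uv^{-1}$ as a product and handling the inverse factor via the power rule already established in Proposition \ref{Dv1}.

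First I would apply the product rule from part (\ref{prop1}) of Definition \ref{operador} to the formal product $u\cdot v^{-1}$, obtaining
\[
D(uv^{-1}) = D(u)\,v^{-1} + u\,D(v^{-1}).
\]
Next I would invoke Proposition \ref{Dv1} with $n=-1$ to rewrite the second term as $D(v^{-1}) = -v^{-2}D(v)$. Substituting and factoring out $v^{-2}$ yields
\[
D(uv^{-1}) = D(u)\,v^{-1} - u\,v^{-2}D(v) = \bigl(D(u)\,v - u\,D(v)\bigr)v^{-2},
\]
which is the claimed identity.

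There is no real obstacle here: the argument is a two-line calculation that only uses the product rule, linearity, and the $n=-1$ case of Proposition \ref{Dv1}. The only minor subtlety worth flagging is that $v^{-1}$ must itself be a legitimate formal function in the sense of the recursive definition given in the introduction, but this is guaranteed because $x\mapsto x^{-1}$ is analytic (on its domain), so clause (3) of the definition of a formal function applies, and Proposition \ref{Dv1} was already stated for $n\in\mathbb{Z}$ and hence covers $n=-1$ directly.
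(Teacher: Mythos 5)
Your proof is correct and follows exactly the same route as the paper: apply the product rule from Definition \ref{operador} to $uv^{-1}$, use Proposition \ref{Dv1} with $n=-1$ to compute $D(v^{-1})=-v^{-2}D(v)$, and factor out $v^{-2}$. Your remark that $v^{-1}$ qualifies as a formal function via clause (3) of the recursive definition is a reasonable extra observation that the paper leaves implicit.
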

\begin{proof}
	Let $u,v$ be formal functions; by definition \ref{operador}, $D(uv^{-1})=D(u)v^{-1} + uD(v^{-1})$. By proposition \ref{Dv1}, $D(v^{-1})=-v^{-2}D(v)$. Hence $D(uv^{-1})= D(u)v^{-1}-uv^{-2}D(v)= (D(u)v-uD(v))v^{-2}$. 
\end{proof}
The generalized product rule as well as Leibniz's formula from calculus also hold for formal functions cf. \cite{Chen93}; detailed proofs (by induction) are given in \cite{Thesis}.
\begin{proposition}[Generalized product rule]\label{productos}
	If $u_1,u_2, \ldots , u_n$ are formal functions, then
	$$
	D(u_1u_2 \ldots u_n) = D(u_1)u_2\ldots u_n + D(u_2)u_1u_3\ldots u_n +\cdots + D(u_n)u_1u_2\ldots u_{n-1}.
	$$
\end{proposition}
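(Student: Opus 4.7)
The plan is a straightforward induction on $n$, the number of factors. For the base case $n = 2$, the identity reduces to $D(u_1 u_2) = D(u_1) u_2 + u_1 D(u_2)$, which is precisely the product rule built into Definition \ref{operador}(\ref{prop1}), so there is nothing further to verify. (The case $n = 1$ is of course trivial.)

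For the inductive step, I would assume the formula
$$D(u_1 u_2 \cdots u_n) = \sum_{i=1}^{n} D(u_i) \prod_{\substack{j=1 \\ j \neq i}}^{n} u_j$$
and aim to derive the corresponding identity for $n+1$ factors. The key trick is associativity: regard the product of $n+1$ factors as $(u_1 u_2 \cdots u_n) \cdot u_{n+1}$, apply the two-factor product rule to split
$$D(u_1 \cdots u_{n+1}) = D(u_1 \cdots u_n)\, u_{n+1} + (u_1 \cdots u_n)\, D(u_{n+1}),$$
and then apply the inductive hypothesis to $D(u_1 \cdots u_n)$. Distributing the multiplication by $u_{n+1}$ across the sum absorbs it into each of the $n$ residual products, yielding the first $n$ terms of the desired expansion; the trailing summand $(u_1 \cdots u_n) D(u_{n+1})$ supplies the $(n+1)$-th term, completing the step.

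There is no real obstacle here; the only subtle point is notational. Since the letters of $\Sigma$ are commutative indeterminates, formal products can be rearranged freely, so the placement of the factor $D(u_i)$ within each summand is immaterial, and associativity of multiplication of formal functions (inherited from the recursive construction) justifies the initial regrouping. If one wished to be fully explicit, one could first remark that the two-factor Leibniz rule implies an associative product rule $D((uv)w) = D(u(vw))$ consistent in either grouping, but this is immediate from linearity and the definition.
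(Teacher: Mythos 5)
Your induction on the number of factors, using the two-factor product rule from Definition \ref{operador} for both the base case and the regrouping step, is correct and is exactly the standard argument; the paper itself does not spell out a proof but defers to \cite{Thesis} for ``detailed proofs (by induction),'' which is precisely the route you take. No discrepancy to report.
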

\begin{proposition}[Leibniz's formula]\label{leibniz}
	If $u,v$ are formal functions, then 
	\[
	D^n(uv)=\sum\limits_{k=0}^n \binom{n}{k}D^k(u)D^{n-k}(v).
	\]
\end{proposition}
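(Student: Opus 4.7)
The plan is to prove Leibniz's formula by induction on $n$, mirroring the classical proof from elementary calculus, since all the ingredients (linearity and the product rule) have already been established for the formal derivative operator.

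For the base case, taking $n=0$ gives $D^0(uv)=uv = \binom{0}{0}D^0(u)D^0(v)$, which is immediate from Definition \ref{opplus1}. Alternatively one can take $n=1$, which is exactly the product rule stated in Definition \ref{operador}(\ref{prop1}).

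For the inductive step, I would assume the formula for $n$ and compute
\[
D^{n+1}(uv) = D\bigl(D^n(uv)\bigr) = D\!\left(\sum_{k=0}^n \binom{n}{k} D^k(u)\,D^{n-k}(v)\right).
\]
Next I would push $D$ through the sum by linearity (this relies on the sum rule in Definition \ref{operador}(\ref{prop1}) together with Proposition \ref{Dav} applied to the scalar $\binom{n}{k}$), and then apply the product rule from Definition \ref{operador}(\ref{prop1}) to each term $D\bigl(D^k(u)\,D^{n-k}(v)\bigr) = D^{k+1}(u)\,D^{n-k}(v) + D^k(u)\,D^{n-k+1}(v)$. This splits the expression into two sums.

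The only mildly delicate step is the bookkeeping: in the first of the two resulting sums I would shift the index $k \mapsto k-1$ so both sums collect coefficients of the same product $D^k(u)\,D^{n+1-k}(v)$, isolate the boundary terms at $k=0$ and $k=n+1$, and then combine the interior terms using Pascal's identity $\binom{n}{k-1}+\binom{n}{k} = \binom{n+1}{k}$. The boundary terms carry coefficient $1 = \binom{n+1}{0} = \binom{n+1}{n+1}$, so they fit the same pattern, and one obtains
\[
D^{n+1}(uv) = \sum_{k=0}^{n+1}\binom{n+1}{k} D^k(u)\,D^{n+1-k}(v),
\]
completing the induction. There is no real obstacle here; the argument is entirely formal because the only properties of $D$ being invoked — linearity, the Leibniz product rule, and the definition of iteration — have already been verified for the formal derivative operator earlier in the section.
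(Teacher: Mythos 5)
Your proof is correct, and it follows the same route the paper takes: the paper does not spell out the argument but explicitly defers to a detailed proof by induction in the cited thesis, which is precisely the classical induction with Pascal's identity that you carry out. Nothing is missing; the base case, the use of linearity and the product rule from Definition \ref{operador}, and the index shift are all in order.
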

Given a context-free grammar, if $D(a)\neq D(b)$ then $D^n(a)\neq D^n(b)$ does not necessarily hold for $n\geq 2$. The grammar $G=\{ a \rightarrow ab \ ; \ b\rightarrow  ac \ ; \ c\rightarrow b^2+ac-bc \}$ provides a counterexample: 
\begin{align*}
D^2(a) &= D(D(a)) & & &  D^2(b) &= D(D(b)) & & \\ 
&= D(ab) & &     &  &= D(ac) & & \\ 
&= D(a)b+aD(b) & & & &=D(a)c+aD(c) \\
&= (ab)b+a(ac)  & & &    &=(ab)c+a(b^2+ac-bc) & &\\
&= ab^2+a^2c.    & &&  &= ab^2+a^2c. & &
\end{align*} 
In the above example it is clear that $D^{n}(a)=D^n(b)$ for $n\geq 3$. Actually, in general this is always the case: if $D^k(a)=D^k(b)$, for some $k$, then $D^n(a)=D^n(b)$ for all $n>k$. That is so because $n$ can be written as $n=m+k$, and we have $D^n(a)= D^m(D^k(a))= D^m(D^k(b)) = D^{m+k}(b)= D^{n}(b)$.

\medskip

On the other hand, from $D(a^2)=D(b^2)$ does not necessarily  follow that $D(a)=D(b)$. For instance, given the grammar $G=\{a\rightarrow ab ; b \rightarrow a^2 \}$,  $D(a^2)=2aD(a)=2a^2b$ and  $D(b^2)= 2bD(b)=2a^2b$; however $D(a)\neq D(b)$. Similarly, if  $D(a^2)=D(ab)$, then  $D(a)=D(b)$ does not necessarily hold; for instance, for the grammar  $G=\{ a\rightarrow ab  ; b \rightarrow 2ab-b^2 \}$ we have  $D(ab)= 2a^2b$ and $D(a^2)=2a^2b$; however $D(a)\neq D(b)$. These examples provide useful  insight and allow us to state the following assertion.
\begin{proposition}
	There is no context-free grammar such that $	D(a^2)=D(b^2)=D(ab)$, with $a\neq b$ and   $D(a), D(b) \neq 0$.
\end{proposition}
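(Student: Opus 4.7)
The plan is to expand the three equalities $D(a^2) = D(b^2) = D(ab)$ via the product rule (Definition~\ref{operador} together with Proposition~\ref{Dv1}), and then reduce the resulting system to a single polynomial identity in $a$, $b$, $D(a)$ whose only solutions are the forbidden cases $a = b$ or $D(a) = 0$.

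First I would compute $D(a^2) = 2aD(a)$, $D(b^2) = 2bD(b)$, and $D(ab) = bD(a) + aD(b)$. The equation $D(a^2) = D(b^2)$ then reduces to $aD(a) = bD(b)$, while $D(a^2) = D(ab)$ reduces to $(2a - b)D(a) = aD(b)$. The next step is to eliminate $D(b)$ from the system: multiplying the second relation by $b$ gives $b(2a - b)D(a) = abD(b)$, and substituting $abD(b) = a \cdot aD(a) = a^2 D(a)$ (from the first relation multiplied by $a$) yields $b(2a - b)D(a) = a^2 D(a)$. Rearranging and factoring produces $(a - b)^2 D(a) = 0$.

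To conclude, I need to argue that $(a - b)^2 D(a) = 0$ is incompatible with the hypotheses $a \neq b$ and $D(a) \neq 0$. Since the letters of $\Sigma$ are treated as independent commutative indeterminates, every formal function over $\Sigma$ embeds into an integral domain (for instance the ring $\mathbb{R}[[a, b, \ldots]]$ of formal power series in the letters), so cancellation of the nonzero factor $D(a)$ is legitimate and forces $(a - b)^2 = 0$, hence $a = b$, contradicting the hypothesis. I expect this last step to be the main subtle point: the algebraic manipulation is completely mechanical, but one must commit to viewing the ambient algebra of formal functions as an integral domain in order to both cancel $D(a)$ and conclude from $(a - b)^2 = 0$ that $a = b$.
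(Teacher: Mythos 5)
Your proposal is correct and is essentially the paper's proof in a different guise: the paper writes the two relations as a homogeneous $2\times 2$ linear system in $D(a),D(b)$ and computes its determinant to be $-2(a-b)^2$, whereas you perform the same elimination by hand and land on $(a-b)^2D(a)=0$ --- the same key quantity and the same conclusion. Your explicit remark that one must view the formal functions as sitting inside an integral domain in order to cancel $D(a)$ and deduce $a=b$ is a point the paper leaves implicit, but it does not change the route of the argument.
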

\begin{proof}
	Since $D(a^2)=D(ab)$ we get $2aD(a)= D(a)b+aD(b)$, thus obtaining
	\begin{equation}\label{sis1}
	(2a-b)D(a) - aD(b)=0.
	\end{equation} 
	Similarly, from $D(b^2)= D(ab)$ we have $2bD(b)= D(a)b+aD(b)$, so
	\begin{equation}\label{sis2}
	- bD(a)+(2b-a)D(b)=0.
	\end{equation}
	From (\ref{sis1}) and (\ref{sis2}) we obtain the following system of linear equations	
	\begin{equation}\label{system}
	\begin{bmatrix}
	2a-b & -a \\
	-b & -a+2b
	\end{bmatrix}
	\begin{bmatrix}
	D(a) \\
	D(b)
	\end{bmatrix}=
	\begin{bmatrix}
	0 \\
	0
	\end{bmatrix}.
	\end{equation}
	For the matrix	$A=\left(\begin{smallmatrix}
	2a-b & -a \\
	-b & -a+2b
	\end{smallmatrix}\right)$, 
	$\det(A) = -2a^2 + 4ab -2b^2=-2(a-b)^2$;
	since $a\neq b$, $\det(A)\neq 0$. But the system (\ref{system}) is homogeneous, therefore has a single unique solution $D(a)=D(b)=0$.
\end{proof}

\section{Binomial coefficients and multifactorial numbers through context-free grammars}\label{context-freenumbers}

In this section we consider two specific context-free grammars and use the formal derivative operator defined on them to prove some properties about binomial coefficients and multifactorial numbers.

\begin{lemma}\label{basicocombinado}
	If $G=\{ a \rightarrow a \}$, then $D^n(a^m)=m^na^m$ for all  $m,n\geq 0$.
\end{lemma}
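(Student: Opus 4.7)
The plan is to observe that, under the grammar $G = \{a \to a\}$, the monomial $a^m$ is an eigenvector of $D$ with eigenvalue $m$, and then iterate. Concretely, by the production rule we have $D(a) = a$, so Proposition \ref{Dv1} gives
\[
D(a^m) = m a^{m-1} D(a) = m a^{m-1} \cdot a = m a^m.
\]
This is the one computational input; everything else is bookkeeping.

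With $D(a^m) = m a^m$ in hand, I would induct on $n$. The base case $n = 0$ is immediate from Definition \ref{opplus1}, since $D^0(a^m) = a^m = m^0 a^m$ (adopting the usual convention $0^0 = 1$ to cover $m = 0$ as well). For the inductive step, assuming $D^n(a^m) = m^n a^m$, apply $D$ to both sides and use linearity (Proposition \ref{Dav} together with the fact that $m^n \in \mathbb{R}$):
\[
D^{n+1}(a^m) = D\bigl(D^n(a^m)\bigr) = D(m^n a^m) = m^n D(a^m) = m^n \cdot m a^m = m^{n+1} a^m.
\]

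There is really no main obstacle here; the only point worth double-checking is the edge case $m = 0$, where $a^0$ is the constant $1$, so $D(a^0) = 0$ and the formula reads $D^n(a^0) = 0^n \cdot 1$, which is consistent for $n \geq 1$ and reduces to $1$ for $n = 0$ under the convention $0^0 = 1$. Since Proposition \ref{Dv1} was stated for all $n \in \mathbb{Z}$, this case is already covered by the cited result and needs no separate argument.
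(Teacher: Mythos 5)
Your proof is correct and follows essentially the same route as the paper's: induction on $n$, with base case $D^0(a^m)=a^m$ and the inductive step carried out via $D(m^n a^m)=m^n\cdot m a^{m-1}D(a)=m^{n+1}a^m$. Your extra attention to the $m=0$ edge case (with the convention $0^0=1$) is a sensible refinement but does not change the argument.
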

\begin{proof}
	We argue by induction on $n$. By definition  \ref{opplus1}, $D^0(a^m)=a^m$. Assuming that $D^n(a^m)=m^na^m$, $D^{n+1}(a^m)= D(D^n(a^m))= D(m^na^m)= m^n(ma^{m-1}D(a))=m^{n+1}a^m$.
\end{proof}

The following proposition states two well known properties regarding binomial coefficients for which we give new proofs based on the context-free grammar of Lemma \ref{basicocombinado}. Standard combinatorial proofs are ready available, see for instance \cite{Brualdi}.
\begin{proposition}\label{binom1}
	For $n\in \mathbb{N}$: 
	\begin{enumerate}
		\item $\displaystyle{\sum\limits_{k=0}^n \binom{n}{k}} = 2^n$.
		\item	$\displaystyle{\sum\limits_{k=0}^n (-1)^k\binom{n}{k}} = 0$.
	\end{enumerate}
\end{proposition}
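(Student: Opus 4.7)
The plan is to use Lemma \ref{basicocombinado} together with Leibniz's formula (Proposition \ref{leibniz}) applied to suitable products of powers of $a$, so that the left-hand side comes from the explicit closed form $D^n(a^m)=m^na^m$ while the right-hand side expands as a binomial sum. Part (1) should arise from writing $a^2=a\cdot a$, and part (2) from the identity $a\cdot a^{-1}=1$.

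For part (1), I would first invoke Lemma \ref{basicocombinado} with $m=2$ to obtain $D^n(a^2)=2^na^2$. Then I would apply Leibniz's formula to the product $a\cdot a$:
\[
D^n(a\cdot a)=\sum_{k=0}^n \binom{n}{k}D^k(a)D^{n-k}(a).
\]
Using the lemma again with $m=1$ gives $D^j(a)=a$ for every $j\geq 0$, so the right-hand side collapses to $a^2\sum_{k=0}^n\binom{n}{k}$. Equating the two expressions for $D^n(a^2)$ and canceling $a^2$ (a nonzero formal function) yields the claim.

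For part (2), the key observation is that $a\cdot a^{-1}=1$ is a constant, hence $D^n(a\cdot a^{-1})=0$ for every $n\geq 1$. Proposition \ref{Dv1} applied via Lemma \ref{basicocombinado} (or a direct induction) gives $D^{n-k}(a^{-1})=(-1)^{n-k}a^{-1}$, while $D^k(a)=a$ as above. Leibniz's formula then gives
\[
0=D^n(1)=\sum_{k=0}^n\binom{n}{k}D^k(a)D^{n-k}(a^{-1})=\sum_{k=0}^n\binom{n}{k}(-1)^{n-k},
\]
and since $(-1)^{n-k}=(-1)^n(-1)^k$, we can factor out $(-1)^n$ and conclude $\sum_{k=0}^n(-1)^k\binom{n}{k}=0$ for $n\geq 1$.

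I do not anticipate a genuine obstacle, since both identities reduce to straightforward applications of Leibniz's formula once the right product is chosen; the only subtle point is justifying $D^n(a^{-1})=(-1)^n a^{-1}$, which follows from Proposition \ref{Dv1} with $n=-1$ combined with an easy induction, and checking that the range of $n$ in part (2) begins at $n=1$ so that $D^n(1)=0$ is actually available.
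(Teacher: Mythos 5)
Your proposal is correct and follows essentially the same route as the paper: Leibniz's formula applied to $a\cdot a$ and $a\cdot a^{-1}$ over the grammar $\{a\rightarrow a\}$, with $D^n(a^m)=m^na^m$ from Lemma \ref{basicocombinado} supplying both sides. Your observations that the $m=-1$ case needs a word of justification (the lemma is stated for $m\geq 0$, though its proof goes through for any integer via Proposition \ref{Dv1}) and that part (2) requires $n\geq 1$ are both points the paper glosses over.
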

\begin{proof}
	Let $G$ be the grammar $\{a\rightarrow a\}$. 
	\begin{enumerate}
		\item  By Leibniz's formula,
		\begin{equation}\label{eq1}
		D^n(a^2)= \sum\limits_{k=0}^n \binom{n}{k} D^{n-k}(a)D^k(a). 
		\end{equation}
		By taking $m=1$ and $m=2$ in Lemma \ref{basicocombinado} we have $D^n(a)=a$ and $D^n(a^2)=2^na^2$; substituting in (\ref{eq1}):
		\begin{equation*}
		2^na^2 = \sum\limits_{k=0}^n \binom{n}{k} (a)(a).
		\end{equation*}
		By equating the coefficient of $a^2$ it follows  $\sum\limits_{k=0}^n \binom{n}{k} = 2^n$.
		\item By Leibniz's formula,
		\begin{equation}\label{eq2}
		D^n(aa^{-1}) = \sum\limits_{k=0}^n \binom{n}{k} D^{n-k}(a)D^k(a^{-1}). 
		\end{equation}
		By taking $m=1$ and $m=-1$ in Lemma \ref{basicocombinado} we have $D^n(a)=a$ and $D^n(a^{-1})=(-1)^na^{-1}$; substituting in (\ref{eq2}):
		\begin{equation*}
		D^n(a^0)  = \sum\limits_{k=0}^n \binom{n}{k} (a)((-1)^ka^{-1}).  
		\end{equation*}
		Since $a^0=1$, $D^n(a^0)=0$. Thus  $\sum\limits_{k=0}^n (-1)^k \binom{n}{k} = 0$.
	\end{enumerate}
\end{proof}

The multifactorial numbers $n!_r$, also known as $(n,r)$-factorial numbers \cite{nkfactorials}, are given by the recurrence relation,
\[
n!_r=n(n-r)!_r \ \ \text{with} \ (1-r)!_r=\cdots  =(-1)!_r=0!_r=1.
\]
For instance, $(17)!_5=(17)(12)(7)(2)=2856$. When $r=1$ we get factorial numbers i.e., $n!_1=n!$;  when $r=2$ we get double factorial numbers i.e., $n!_2=n!!$, \cite{enciclopedia}. The following lemma establishes a connection between the context-free grammar $G=\{a\rightarrow a^{r+1}\}$ and multifactorial numbers.
\begin{lemma}\label{fact-Dnam}
	If $G=\{ a \rightarrow a^{r+1}  \}$, then $D^n(a^m)=\frac{(m+(n-1)r)!_r}{(m-r)!_r}a^{m+nr}$ for $n\geq 0$.
\end{lemma}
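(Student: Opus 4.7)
The plan is to prove the identity by straightforward induction on $n$, using only Proposition \ref{Dv1} (the power rule) together with the defining recurrence $n!_r = n \cdot (n-r)!_r$ of the multifactorial numbers.

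For the base case $n = 0$, I would check that the right-hand side reduces correctly: it becomes $\frac{(m-r)!_r}{(m-r)!_r} a^{m} = a^{m}$, matching $D^0(a^m) = a^m$ from Definition \ref{opplus1}. It is worth doing $n = 1$ as a sanity check too, since this is where the recurrence $m!_r = m(m-r)!_r$ first collapses the ratio into the single factor $m$, giving $D(a^m) = m a^{m+r}$ (which also follows directly from Proposition \ref{Dv1} applied to $D(a) = a^{r+1}$).

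For the inductive step, assume $D^n(a^m) = \frac{(m+(n-1)r)!_r}{(m-r)!_r} a^{m+nr}$. Then, applying $D$ once more and using Proposition \ref{Dv1}:
\begin{equation*}
D^{n+1}(a^m) = D\!\left( \frac{(m+(n-1)r)!_r}{(m-r)!_r}\, a^{m+nr} \right) = \frac{(m+(n-1)r)!_r}{(m-r)!_r}\,(m+nr)\, a^{m+nr-1} D(a),
\end{equation*}
which, since $D(a) = a^{r+1}$, equals $\frac{(m+nr)\,(m+(n-1)r)!_r}{(m-r)!_r}\, a^{m+(n+1)r}$. To match the target formula I would then invoke the multifactorial recurrence at the argument $m+nr$, namely
\begin{equation*}
(m+nr)!_r = (m+nr)\cdot(m+nr - r)!_r = (m+nr)\cdot (m+(n-1)r)!_r,
\end{equation*}
which converts the numerator into $(m+nr)!_r$ and finishes the induction.

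There is no real obstacle here: the argument is essentially a bookkeeping exercise. The only subtlety to keep in mind is that the index $m+(n-1)r$ can be negative for small $n$, so one should appeal to the stated initial values $(1-r)!_r = \cdots = (-1)!_r = 0!_r = 1$ to make sure the ratio $\frac{(m+(n-1)r)!_r}{(m-r)!_r}$ is well-defined (in particular for $n=0$), and to make sure the multifactorial recurrence applies at the relevant index in the inductive step.
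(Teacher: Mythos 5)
Your proposal is correct and follows essentially the same route as the paper: induction on $n$, the power rule from Proposition \ref{Dv1} applied to $D(a)=a^{r+1}$, and the multifactorial recurrence $(m+nr)!_r=(m+nr)(m+(n-1)r)!_r$ to absorb the new factor into the numerator. Your extra remarks on the $n=1$ sanity check and on the initial values $(1-r)!_r=\cdots=0!_r=1$ are reasonable additions but do not change the argument.
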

\begin{proof}
	We argue by induction on $n$.	Since $D^0(a^m)=a^m$, the lemma is true for $n=0$. Assuming that $D^n(a^m)=\frac{(m+(n-1)r)!_r}{(m-r)!_r}a^{m+n}$, $D^{n+1}(a^m)$	 is calculated  as follows:
	\begin{align*}
	D^{n+1}(a^m)&=D(D^{n}(a^m))\\
	&=D\left(\frac{(m+(n-1)r)!_r}{(m-r)!_r}a^{m+nr}\right) \\
	&=\frac{(m+(n-1)r)!_r}{(m-r)!_r}(m+nr)a^{m+nr-1}D(a) \\
	&=\frac{(m+nr)!_r}{(m-r)!_r}a^{m+nr-1}(a^{r+1}) \\	
	&=\frac{(m+nr)!_r}{(m-r)!_r}a^{m+(n+1)r}.
	\end{align*}
\end{proof}
\begin{proposition}\label{newfactorial}
	For $m,n,r$ integers, $m,n\geq 0$ and $r\geq 1$, we have
	\[
	\dfrac{(2m+(n-1)r)!_r}{(2m-r)!_r}= \displaystyle{\sum\limits_{k=0}^n \binom{n}{k}} \dfrac{(m+(k-1)r)!_r}{(m-r)!_r}\dfrac{(m+(n-k-1)r)!_r}{(m-r)!_r}.
	\]
\end{proposition}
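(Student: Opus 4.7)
The plan is to mimic the proof strategy of Proposition \ref{binom1}: pick the grammar $G=\{a\rightarrow a^{r+1}\}$ of Lemma \ref{fact-Dnam}, compute $D^n(a^{2m})$ in two different ways, and equate the coefficients of the resulting monomial in $a$.

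First, by writing $a^{2m}$ simply as a power and applying Lemma \ref{fact-Dnam} with exponent $2m$, I get
\[
D^n(a^{2m}) = \frac{(2m+(n-1)r)!_r}{(2m-r)!_r}\, a^{2m+nr},
\]
which produces the left-hand side of the claimed identity as a coefficient. Next, by writing $a^{2m} = a^m \cdot a^m$ and applying Leibniz's formula (Proposition \ref{leibniz}), I get
\[
D^n(a^m \cdot a^m) = \sum_{k=0}^{n} \binom{n}{k} D^{k}(a^m)\, D^{n-k}(a^m).
\]
Substituting the formula from Lemma \ref{fact-Dnam} into each factor $D^{k}(a^m)$ and $D^{n-k}(a^m)$, every term carries the common power $a^{m+kr}\cdot a^{m+(n-k)r}=a^{2m+nr}$, so the sum collapses to
\[
\left(\sum_{k=0}^{n} \binom{n}{k} \frac{(m+(k-1)r)!_r}{(m-r)!_r}\cdot\frac{(m+(n-k-1)r)!_r}{(m-r)!_r}\right) a^{2m+nr}.
\]

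Equating the two expressions for $D^n(a^{2m})$ and reading off the coefficient of $a^{2m+nr}$ yields the identity.

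There is essentially no obstacle here: the only bookkeeping to watch is that the exponents $m+kr$ and $m+(n-k)r$ add correctly to $2m+nr$ so the monomials match, and that the indexing $(m+(k-1)r)!_r/(m-r)!_r$ agrees with Lemma \ref{fact-Dnam} for every $k$ in the range $0 \le k \le n$ (including the boundary cases $k=0$ and $k=n$, where $D^0(a^m)=a^m$ corresponds to the convention $(m-r)!_r/(m-r)!_r = 1$).
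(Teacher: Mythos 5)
Your proposal is correct and follows essentially the same route as the paper: apply Leibniz's formula to $a^{2m}=a^m\cdot a^m$ under the grammar $\{a\rightarrow a^{r+1}\}$, evaluate both sides via Lemma \ref{fact-Dnam}, and equate the coefficients of $a^{2m+nr}$. The only difference is cosmetic --- you explicitly flag the boundary cases $k=0$ and $k=n$, which the paper leaves implicit.
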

\begin{proof}
	Let $G$ be the grammar $\{ a \rightarrow a^{r+1} \}$. By Leibniz's formula,
	\begin{equation}\label{eqproblem}
	D^n(a^{2m})= \sum\limits_{k=0}^n \binom{n}{k} D^k(a^m)D^{n-k}(a^m).
	\end{equation}
	By Lemma \ref{fact-Dnam}, $D^n(a^{2m})=\dfrac{(2m+(n-1)r)!_r}{(2m-r)!_r}a^{2m+nr}$; substituting in (\ref{eqproblem}) we get
	\begin{equation}\label{eqproblem2}
	\dfrac{(2m+(n-1)r)!_r}{(2m-r)!_r}a^{2m+nr}= \sum\limits_{k=0}^n \binom{n}{k} D^k(a^m)D^{n-k}(a^m).
	\end{equation}
	On the other hand, by taking $k$ and $n-k$ instead of $n$, respectively, in Lemma \ref{fact-Dnam}, we have $D^k(a^m)=\dfrac{(m+(k-1)r)!_r}{(m-r)!_r}a^{m+kr}$  and $D^{n-k}(a^m)=\dfrac{(m+(n-k-1)r)!_r}{(m-r)!_r}a^{m+(n-k)r}$. Substituting in the right hand side of (\ref{eqproblem2}):
	\begin{align*}
	&\phantom{xx} \sum\limits_{k=0}^n \binom{n}{k} \left[\dfrac{(m+(k-1)r)!_r}{(m-r)!_r}a^{m+kr} \right] \left[\dfrac{(m+(n-k-1)r)!_r}{(m-r)!_r}a^{m+(n-k)r} \right] \\
	&= \sum\limits_{k=0}^n \binom{n}{k} \dfrac{(m+(k-1)r)!_r}{(m-r)!_r}  \dfrac{(m+(n-k-1)r)!_r}{(m-r)!_r}a^{2m+nr}. 
	\end{align*}
	By equating the  coefficients of $a^{2m+nr}$ we obtain 
	\begin{equation*}
	\dfrac{(2m+(n-1)r)!_r}{(2m-r)!_r}= \displaystyle{\sum\limits_{k=0}^n \binom{n}{k}} \dfrac{(m+(k-1)r)!_r}{(m-r)!_r} \dfrac{(m+(n-k-1)r)!_r}{(m-r)!_r}.\qedhere
	\end{equation*}
\end{proof}

\begin{corollary}
	For $n\geq 0$ and $r \geq 1$, 	$((n+1)r)!_r=r\displaystyle{\sum\limits_{k=0}^n \binom{n}{k}} (kr)!_r ((n-k)r)!_r$.
\end{corollary}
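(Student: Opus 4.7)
The plan is to deduce this as a direct specialization of Proposition \ref{newfactorial}, choosing the parameter $m$ so that the various shifted multifactorials collapse to the clean form $(kr)!_r$. Looking at the right-hand side of the corollary, I notice that $(kr)!_r$ corresponds to $(m+(k-1)r)!_r$ with $m=r$, and simultaneously $(m-r)!_r = 0!_r = 1$ at $m=r$, which will neatly clear the denominators $(m-r)!_r$ appearing in Proposition \ref{newfactorial}.

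First, I substitute $m = r$ into the identity of Proposition \ref{newfactorial}. On the right-hand side, each denominator $(m-r)!_r$ becomes $0!_r = 1$, and each numerator takes the form $(r+(k-1)r)!_r = (kr)!_r$ or $(r+(n-k-1)r)!_r = ((n-k)r)!_r$, so the sum simplifies directly to $\sum_{k=0}^n \binom{n}{k}(kr)!_r ((n-k)r)!_r$. On the left-hand side, the numerator becomes $(2r+(n-1)r)!_r = ((n+1)r)!_r$ and the denominator becomes $(2r-r)!_r = r!_r$. Using the defining recurrence $r!_r = r \cdot (r-r)!_r = r \cdot 0!_r = r$, the left-hand side equals $((n+1)r)!_r / r$.

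Multiplying through by $r$ yields the claimed identity $((n+1)r)!_r = r \sum_{k=0}^n \binom{n}{k}(kr)!_r((n-k)r)!_r$. The only step that requires a moment of care is the evaluation $r!_r = r$, which must be read off from the base cases $(1-r)!_r = \cdots = 0!_r = 1$ of the multifactorial recurrence; everything else is bookkeeping in Proposition \ref{newfactorial}. Since the proposition has already been established via the grammar $G=\{a \rightarrow a^{r+1}\}$ and Leibniz's formula, no further grammatical computation is needed and there is no real obstacle beyond correctly tracking the indices.
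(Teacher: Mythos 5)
Your proof is correct and follows exactly the paper's route: the paper's entire proof is ``By taking $r=m$ in Proposition \ref{newfactorial},'' and you have simply spelled out the bookkeeping (the evaluations $0!_r=1$ and $r!_r=r$) that the paper leaves implicit. No issues.
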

\begin{proof}
	By taking $r=m$ in Proposition \ref{newfactorial}.
\end{proof}
\begin{corollary}\label{Doblefactpropiedad}
	For $n\geq 0$, 	$(2n)!! = \displaystyle{\sum\limits_{k=0}^n} \binom{n}{k}(2(n-k)-1)!!(2k-1)!!$.
\end{corollary}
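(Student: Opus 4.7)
The plan is to specialize Proposition \ref{newfactorial} to the values $m=1$ and $r=2$, which is the natural analogue of the substitution $r=m$ used in the previous corollary but now tuned to produce double factorials. First I would observe that with $r=2$ the operator $!_r$ is exactly the double factorial $!!$, so every multifactorial in the identity becomes something of the form $(\text{odd})!!$ or $(\text{even})!!$. Next I would unpack the boundary conventions: the recurrence in the paper gives $(1-r)!_r = \cdots = 0!_r = 1$, hence $(-1)!_2 = 0!_2 = 1$, which is exactly what is needed so that the denominators $(m-r)!_r = (1-2)!_2 = (-1)!_2$ disappear harmlessly.

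Once that setup is in place, the identity essentially reads itself off. The left-hand side of Proposition \ref{newfactorial} becomes
\[
\frac{(2m+(n-1)r)!_r}{(2m-r)!_r} \;=\; \frac{(2+2(n-1))!!}{0!!} \;=\; (2n)!!,
\]
and the right-hand side becomes
\[
\sum_{k=0}^{n}\binom{n}{k}\frac{(2k-1)!!}{(-1)!!}\,\frac{(2(n-k)-1)!!}{(-1)!!} \;=\; \sum_{k=0}^{n}\binom{n}{k}(2k-1)!!\,(2(n-k)-1)!!,
\]
which agrees with the stated sum after interchanging $k$ and $n-k$ (legitimate since $\binom{n}{k}=\binom{n}{n-k}$).

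There is no genuine obstacle here: the argument is a substitution plus a symmetric re-indexing. The only thing that requires care is the bookkeeping around the base cases of the multifactorial sequence, in particular confirming $(-1)!_2 = 1$ from the definition given in the paper, so that one does not inadvertently divide by zero or misread the numerators $(m+(k-1)r)!_r = (2k-1)!!$ when $k=0$ (which produces $(-1)!!$, again equal to $1$ by convention). With that in hand, the proof reduces to a single line in the style of the previous corollary: \emph{by taking $m=1$ and $r=2$ in Proposition \ref{newfactorial}}.
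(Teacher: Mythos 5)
Your proposal is correct and is exactly the paper's proof: the paper simply says ``By taking $r=2$ and $m=1$ in Proposition \ref{newfactorial}.'' Your additional care with the conventions $(-1)!_2=0!_2=1$ is sound (and the final re-indexing is not even needed, since the two double factorials appear as a commutative product).
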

\begin{proof}
	By taking $r=2$ and $m=1$ in Proposition \ref{newfactorial}, 
\end{proof}

A combinatorial proof of Corollary \ref{Doblefactpropiedad} can be found in \cite{fun}, Theorem 3.

\section{Formal derivative operator with respect to matrix grammars}
Matrix grammars were introduced in \cite{matrix}  as a generalization of standard context-free grammars. Following \cite{india}, a matrix grammar is defined  as follows.
\begin{definition}
	A matrix grammar $G$ is a quadruple $G=(V_N,V_T,S,M)$, such that:
	\begin{enumerate}
		\item $V_N$ is a finite set of objects, called variables.
		\item $V_T$ is a finite set of objects, called terminal symbols.
		\item $S \in V_N$ is a symbol called the start variable.
		\item $M$ is a finite set of sequences of the form $[A_1 \rightarrow x_1 , \ldots , A_n \rightarrow x_n ]$,  with $A_i \in (V_N\cup V_T)^+$ and $x_i \in (V_N\cup V_T)^*$, $i=1,\ldots , n$.
	\end{enumerate}
\end{definition}
$G$ is a matrix grammar of type $k$ if and only if each $[A_1 \rightarrow x_1 , \ldots ,A_n \rightarrow x_n ]$ in $M$ is a grammar of type $k$ in the Chomsky hierarchy \cite{india}. For instance, the grammars of type $2$ $g_1=\{ a\rightarrow a+b \ ; \ b \rightarrow b \}$ and $ g_2= \{ a\rightarrow a \ ; \ b \rightarrow a-b \}$ can be unified in the matrix grammar of type $2$, $G=\left\{ [ a\rightarrow a+b \ ; \ b \rightarrow b ]\ ,\ [ a\rightarrow a \ ; \ b \rightarrow a-b ] \right\}$. Matrix grammars of type $2$ are named context-free matrix grammars in \cite{Paun-libro}, and we use that terminology henceforth.

\medskip

We now proceed to extend the definition of the formal derivative operator $D$ (Definition \ref{operador}) to context-free matrix grammars.

\begin{definition}\label{operadormatriz}
	Let $G=\{ g_1, \ldots , g_n \}$ be a context-free matrix grammar, i.e., each $g_i$ is a context-free grammar: $g_i = [ a_{1} \rightarrow w_{i1}\ ;  \ldots ; a_{m} \rightarrow w_{im}  ]$, each $w_{ik}$ being a formal function. If $a_{j} \rightarrow w_{ij}$ is a production in $g_i$, then $D_i(a_j) =w_{ij}$. In other cases, $D_i(a_j)=0$.	Moreover, $D_{ik}(a_j)=D_i(D_k(a_j))$, $D_{ik}^{n+1}(a_j)=D_{ik}(D_{ik}^{n}(a_j))$ and $D_{ik}^0(a_j)=a_j$ for any formal function  $a_j$. 
\end{definition}

For the grammar $G=\left\{ [ a\rightarrow a+b \ ; \ b \rightarrow b ]\ ,\ [ a\rightarrow a \ ; \ b \rightarrow a-b ] \right\}$ it is easy to check by using Definition \ref{operadormatriz}   that $D_1(a)=a+b$, $D_1(b)=b$, $D_2(a)=a$ and $D_2(b)=a-b$. The following example shows that $D_{ik}(w)$ does not necessarily agrees with $D_{ki}(w)$.
\begin{example}\label{ejemplo}
	Given $G=\left\{ [ a\rightarrow a+b \ ; \ b \rightarrow b ]\ ,\ [ a\rightarrow a \ ; \ b \rightarrow a-b ] \right\}$, we calculate  $D_{21}(a+b)$ and $D_{12}(a+b)$.
	
	\medskip
	
	Here $g_1=\{ a\rightarrow a+b \ ; \ b \rightarrow b \}$ and $ g_2= \{ a\rightarrow a \ ; \ b \rightarrow a-b \} $, hence
	\begin{align*}
	D_{12}(a+b) &= D_1(D_2(a+b)) & & &  D_{21}(a+b) &= D_2(D_1(a+b)) & &  \\
	&= D_1(D_2(a)+D_2(b)) & & &   &= D_2(D_1(a)+D_1(b)) & &  \\
	&= D_1((a)+(a-b)) & &     &  &= D_2((a+b)+(b)) & &  \\ 
	&= D_1(2a-b) & & & &=D_2(a+2b) \\
	&= 2D_1(a)-D_1(b)  & & &    &= D_2(a)+2D_2(b) & & \\
	&= 2(a+b)-(b)  & & &    &= (a)+2(a-b) & & \\
	&= 2a+b.  & & &    &= 3a-2b. & & 
	\end{align*} 
\end{example}

As the following results show, we can also use matrix grammars for generating multifactorial numbers. For this purpose we need the following lemma.

\begin{lemma}\label{lemamatrix}
	For the grammar $G=\{ [a\rightarrow a ; b \rightarrow b] , [a\rightarrow a^rb ; b\rightarrow a^{r-1}b^2] \}$, we have
	\begin{enumerate}
		\item $D_1(a^mb^n)=(m+n)a^mb^n$.
		\item $D_2(a^mb^n)=(m+n)a^{m+r-1}b^{n+1}$.   	
	\end{enumerate}    	
\end{lemma}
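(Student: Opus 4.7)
The plan is to prove both identities by a direct application of the product rule (Proposition \ref{productos}) and the power rule (Proposition \ref{Dv1}), applied separately for each operator $D_1$ and $D_2$. Since Definition \ref{operadormatriz} sets $D_i(a_j) = w_{ij}$ exactly as a context-free grammar production would, each individual $D_i$ obeys the same formal rules already established in Section 2. This means no induction on $m$ or $n$ is needed; a single expansion suffices.

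For part (1), I would first compute $D_1(a) = a$ and $D_1(b) = b$ from the first component of the matrix grammar. Then by Proposition \ref{Dv1}, $D_1(a^m) = m a^{m-1} D_1(a) = m a^m$ and $D_1(b^n) = n b^{n-1} D_1(b) = n b^n$. Applying the product rule,
\begin{equation*}
D_1(a^m b^n) = D_1(a^m)\, b^n + a^m\, D_1(b^n) = m a^m b^n + n a^m b^n = (m+n) a^m b^n,
\end{equation*}
which is exactly the stated formula.

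For part (2), the second component gives $D_2(a) = a^r b$ and $D_2(b) = a^{r-1} b^2$. Again by Proposition \ref{Dv1}, $D_2(a^m) = m a^{m-1} D_2(a) = m a^{m+r-1} b$ and $D_2(b^n) = n b^{n-1} D_2(b) = n a^{r-1} b^{n+1}$. Using the product rule again,
\begin{equation*}
D_2(a^m b^n) = m a^{m+r-1} b \cdot b^n + a^m \cdot n a^{r-1} b^{n+1} = (m+n)\, a^{m+r-1} b^{n+1},
\end{equation*}
as required.

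There is essentially no obstacle here: the proof is a one-line computation in each case, and the only conceptual point worth flagging is that the rules of Section 2 carry over verbatim to each coordinate $D_i$ of a matrix grammar, since Definition \ref{operadormatriz} preserves the three clauses of Definition \ref{operador} componentwise. The reason this lemma deserves explicit statement is presumably that the formulas are the key input to the subsequent generation of multifactorial numbers via iterated applications of $D_{12}$ or $D_{21}$, where the tidy factor $(m+n)$ allows the count to telescope cleanly.
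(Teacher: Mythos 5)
Your proof is correct and follows essentially the same route as the paper: both compute $D_i(a^mb^n)$ by combining the power rule with the product rule and then substituting the productions $D_i(a)$, $D_i(b)$ from the corresponding component grammar. The paper merely writes the expansion in one step as $ma^{m-1}b^nD_i(a)+na^mb^{n-1}D_i(b)$ rather than splitting off $D_i(a^m)$ and $D_i(b^n)$ first, which is an immaterial difference.
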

\begin{proof}
	Since $g_1=\{a\rightarrow a ; b \rightarrow b\}$, we have
	\begin{align*}
	D_1(a^mb^n)&=ma^{m-1}b^nD_1(a)+na^mb^{n-1}D_1(b)\\
	&=ma^{m-1}b^n(a)+na^mb^{n-1}(b)\\
	&=ma^{m}b^n+na^mb^{n}.
	\end{align*}
	On the other hand, since $g_2=\{ a\rightarrow a^rb ; b\rightarrow a^{r-1}b^2 \}$ we get:  	
	\begin{align*}
	D_2(a^mb^n)&=ma^{m-1}b^nD_2(a)+na^mb^{n-1}D_2(b)\\
	&=ma^{m-1}b^n(a^rb)+na^mb^{n-1}(a^{r-1}b^2)\\
	&=ma^{m+r-1}b^{n+1}+na^{m+r-1}b^{n+1}.
	\end{align*}
	Hence $D_1(a^mb^n)=(m+n)a^mb^n$ and $D_2(a^mb^n)=(m+n)a^{m+r-1}b^{n+1}$.   	
\end{proof}

\begin{proposition}
	If $G=\{ [a\rightarrow a ; b \rightarrow b] , [a\rightarrow a^rb ; b\rightarrow a^{r-1}b^2] \}$, then for all $n\geq 0$:
	\begin{enumerate}
		\item $D^n_{12}(a)=(nr+1)!_r((n-1)r+1)!_ra^{nr-(n-1)}b^n$.
		\item $D^n_{21}(a)=((n-1)r+1)!_r^2a^{nr-(n-1)}b^n$.		
		\item $D^n_{12}(b)=(nr+1)!_r((n-1)r+1)!_ra^{nr-n}b^{n+1}$.
		\item $D^n_{21}(b)=((n-1)r+1)!^2_ra^{nr-n}b^{n+1}$.		
	\end{enumerate} 
\end{proposition}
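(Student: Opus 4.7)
My plan is to prove all four statements by a straightforward induction on $n$, using Lemma \ref{lemamatrix} as the engine. The first observation is that, by composing the two formulas of the lemma, both $D_{12}$ and $D_{21}$ act on any monomial $a^{m}b^{k}$ by simply rescaling it and shifting its exponents in a uniform way:
\begin{align*}
D_{12}(a^{m}b^{k}) &= D_{1}\bigl((m+k)a^{m+r-1}b^{k+1}\bigr)=(m+k)(m+k+r)\,a^{m+r-1}b^{k+1},\\
D_{21}(a^{m}b^{k}) &= D_{2}\bigl((m+k)a^{m}b^{k}\bigr)=(m+k)^{2}\,a^{m+r-1}b^{k+1}.
\end{align*}
Thus each iteration of $D_{12}$ or $D_{21}$ preserves the shape ``scalar times monomial'' and shifts the $a$-exponent by $r-1$, the $b$-exponent by $1$, and the total weight $m+k$ by $r$.

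With that in hand, I will prove statement (1) by induction on $n$, the base $n=0$ being immediate since $(1)!_{r}=(-r+1)!_{r}=1$. For the inductive step, assume $D^{n}_{12}(a)=(nr+1)!_{r}\,((n-1)r+1)!_{r}\,a^{nr-(n-1)}b^{n}$. Applying $D_{12}$ and using the boxed formula with $m=nr-(n-1)$, $k=n$ (so $m+k=nr+1$ and $m+k+r=(n+1)r+1$), the new coefficient picks up the factor $(nr+1)\bigl((n+1)r+1\bigr)$. The identity that closes the induction is the multifactorial recurrence applied twice, namely $(nr+1)\cdot((n-1)r+1)!_{r}=(nr+1)!_{r}$ and $((n+1)r+1)\cdot(nr+1)!_{r}=((n+1)r+1)!_{r}$, which together repackage the product as $((n+1)r+1)!_{r}\,(nr+1)!_{r}$; the exponents update to $(n+1)r-n$ and $n+1$, which matches the claimed form.

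Statement (2) is the same induction with the simpler coefficient $(m+k)^{2}=(nr+1)^{2}$ at each step, giving the telescoping $\bigl[(nr+1)!_{r}\bigr]^{2}/\bigl[((n-1)r+1)!_{r}\bigr]^{2}\cdot((n-1)r+1)!_{r}^{2}=(nr+1)!_{r}\,((n-1)r+1)!_{r}$-style cancellation, yielding $((n-1)r+1)!_{r}^{2}$ after $n$ steps; the exponent bookkeeping is identical. Statements (3) and (4) are proved by exactly the same two inductions but starting from $b=a^{0}b^{1}$ rather than from $a=a^{1}b^{0}$; the initial weight $m+k=1$ is the same, so the coefficient sequences coincide, while the resulting $a$- and $b$-exponents shift by one in the expected way, producing $a^{nr-n}b^{n+1}$.

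The only genuine obstacle is keeping the multifactorial arithmetic straight: one must verify at each inductive step that the factor $(nr+1)((n+1)r+1)$ (for the $12$-case) or $(nr+1)^{2}$ (for the $21$-case) multiplies the previous coefficient in precisely the way needed to advance the indices of the $!_{r}$ symbols. Once the two identities $((n+1)r+1)!_{r}=((n+1)r+1)(nr+1)!_{r}$ and $(nr+1)!_{r}=(nr+1)((n-1)r+1)!_{r}$ are isolated, everything else is a direct substitution.
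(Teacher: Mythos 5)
Your proof is correct and takes essentially the same route as the paper: induction on $n$ driven by Lemma \ref{lemamatrix}, composing the two derivations to get the factor $(m+k)(m+k+r)$ for $D_{12}$ (resp.\ $(m+k)^2$ for $D_{21}$) and closing each step with the recurrence $(kr+1)!_r=(kr+1)((k-1)r+1)!_r$; the paper proves only item (1) this way and declares the rest similar, while your weight argument ($m+k=nr+1$ regardless of whether one starts from $a$ or $b$) cleanly covers all four. One small slip: the displayed ``telescoping'' identity in your paragraph on statement (2) is garbled---its left side simplifies to $\bigl[(nr+1)!_r\bigr]^2$, not $(nr+1)!_r\,((n-1)r+1)!_r$---but the inductive computation you actually describe, multiplying $((n-1)r+1)!_r^2$ by $(nr+1)^2$ to obtain $(nr+1)!_r^2$, is right, so this is only a typo and not a gap.
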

\begin{proof}
	Here we prove (1); the other results can be proved similarly.
	
	\medskip
	
	We argue by induction on $n$. Since $D^0_{12}(a)=a$, the proposition is true for $n=0$. Assuming that $D^n_{12}(a)=(nr+1)!_r((n-1)r+1)!_ra^{nr-(n-1)}b^n$, $D^{n+1}_{12}(a)=D_{12}(D^n_{12}(a))$ can be expressed as
	\begin{equation}\label{eq1m}
	D^{n+1}_{12}(a)=(nr+1)!_r((n-1)r+1)!_rD_{1}(D_2( a^{nr-(n-1)}b^n) ).
	\end{equation}
	By Lemma \ref{lemamatrix}, $D_2( a^{nr-(n-1)}b^n)=(nr+1)a^{(n+1)r-n}b^{n+1}$. Substituting in (\ref{eq1m}),
	\begin{equation}\label{eq2m}
	D^{n+1}_{12}(a)=(nr+1)!_r((n-1)r+1)!_r(nr+1)D_{1}(a^{(n+1)r-n}b^{n+1}).
	\end{equation}
	By Lemma \ref{lemamatrix}, $D_1(a^{(n+1)r-n}b^{n+1})=((n+1)r+1)a^{(n+1)r-n}b^{n+1}$. Substituting in (\ref{eq2m}), and using the identities $((n+1)r+1)!_r=((n+1)r+1)(nr+1)!_r$ and $(nr+1)!_r=(nr+1)((n-1)r+1)!_r$ we conclude
	\begin{align*}
	D^{n+1}_{12}(a)&=(nr+1)!_r((n-1)r+1)!_r(nr+1)((n+1)r+1)a^{(n+1)r-n}b^{n+1}\\
	&=((n+1)r+1)!_r(nr+1)!_ra^{(n+1)r-n}b^{n+1}.\qedhere
	\end{align*}
\end{proof}


\end{document}